\documentclass[12pt]{article}

\usepackage{amssymb}
\usepackage{amsmath}
\usepackage{cite}
\usepackage{microtype}
\usepackage{mathrsfs}

\begin{document}

\renewcommand{\citeleft}{{\rm [}}
\renewcommand{\citeright}{{\rm ]}}
\renewcommand{\citepunct}{{\rm,\ }}
\renewcommand{\citemid}{{\rm,\ }}

\newcommand{\LegendreScalarProduct}[3][n]{\left[ #2, #3 \right]_{#1}}

\newcounter{abschnitt}
\newtheorem{satz}{Theorem}
\newtheorem{theorem}{Theorem}[abschnitt]
\newtheorem{koro}[theorem]{Corollary}
\newtheorem{prop}[theorem]{Proposition}
\newtheorem{lem}[theorem]{Lemma}
\newtheorem{approp}{Proposition}[section]
\newtheorem{apthm}[approp]{Theorem}

\newcounter{saveeqn}
\newcommand{\alpheqn}{\setcounter{saveeqn}{\value{abschnitt}}
\renewcommand{\theequation}{\mbox{\arabic{saveeqn}.\arabic{equation}}}}
\newcommand{\reseteqn}{\setcounter{equation}{0}
\renewcommand{\theequation}{\arabic{equation}}}

\hyphenpenalty=7500

\sloppy

\phantom{a}

\vspace{-2.2cm}

\begin{center}
\begin{Large} {\bf Radon Transforms of Projection Functions and Aleksandrov--Fenchel Inequalities} \\[0.7cm] \end{Large}

\begin{large} Leo Brauner, Oscar Ortega-Moreno, Franz E.\ Schuster \end{large}
\end{center}

\vspace{0.1cm}

\emph{Dedicated to Prof.\ Erwin Lutwak on the occasion of his 80th birthday}

\vspace{-0.8cm}

\begin{quote}
\footnotesize{ \vskip 1truecm\noindent {\bf Abstract.} The Radon transforms on Grassmannians define two families of geometric operators on convex bodies when applied to their projection functions. On the one hand, these are Minkowski's projection body maps and on the other hand these operators coincide on origin-symmetric bodies with the mean section body maps introduced by Goodey and Weil in 1992. Over the past two decades various sharp isoperimetric inequalities for projection bodies were shown to also hold for mean section bodies. However, an analogue of Lutwak's log-concavity inequalities for the intrinsic volumes of projection bodies remained open for mean section bodies. We establish new Aleksandrov--Fenchel inequalities for averages of certain mixed volumes and apply them to show that also all the intrinsic volumes of mean section bodies are indeed log-concave.}
\end{quote}

\vspace{0.6cm}

\centerline{\large{\bf{ \setcounter{abschnitt}{1}
\arabic{abschnitt}. Introduction}}} \alpheqn

\vspace{0.6cm}

Radon transforms play a fundamental role in several areas of mathematics such as harmonic and functional analysis, integral geometry, and geometric tomography (see, e.g., the books \textbf{\cite{Gardner2006, Helgason99, Koldobsky05, KoldobskyYaskin}}). In this article we are concerned with Radon transforms of functions on the Grassmannians $\mathrm{Gr}_{i,n}$ of $i$-dimensional subspaces of Euclidean space $\mathbb{R}^n$, where throughout $n \geq 3$. For $1 \leq i, j \leq n - 1$ and $F \in \mathrm{Gr}_{j,n}$, let $\mathrm{Gr}_{i,n}^F$ denote the submanifold of
$\mathrm{Gr}_{i,n}$ which comprises of all $E \in \mathrm{Gr}_{i,n}$ that contain (are contained in, respectively) $F$. The
{\it Radon transform} $R_{i,j}: L^2(\mathrm{Gr}_{i,n}) \rightarrow L^2(\mathrm{Gr}_{j,n})$ is defined by
\[(R_{i,j}f)(F) = \int_{\mathrm{Gr}_{i,n}^F}f(E)\,d\nu_i^F(E),   \]
where $\nu_i^F$ is the unique invariant probability measure on $\mathrm{Gr}_{i,n}^F$.

Let $\mathcal{K}^n$ denote the space of convex bodies (that is, compact, convex subsets) in $\mathbb{R}^n$ endowed with the Hausdorff metric and recall that $K \in \mathcal{K}^n$ is determined by its support function $h(K,u)=\max\{u\cdot x: x \in K\}$, for $u \in S^{n-1}$. The $i$th projection function $\mathrm{vol}_i(K|\,\cdot\,)$ of $K \in \mathcal{K}^n$ is the continuous function on $\mathrm{Gr}_{i,n}$ defined such that $\mathrm{vol}_i(K|E)$, for $E \in \mathrm{Gr}_{i,n}$, is the $i$-dimensional volume of the orthogonal projection of $K$ onto $E$. For $1 \leq i \leq n - 1$, applying the Radon transforms $R_{i,1}$ to $\mathrm{vol}_i(K|\,\cdot\,)$, defines a family of maps $\overline{\mathrm{M}}_i : \mathcal{K}^n \rightarrow \mathcal{K}^n$ by
\begin{equation} \label{defMimaps}
h(\overline{\mathrm{M}}_i K,\,\cdot\,) = R_{i,1} \mathrm{vol}_i(K|\,\cdot\,),
\end{equation}
where we identify an even function on $S^{n-1}$ with a function on $\mathrm{Gr}_{1,n}$. It was first shown by Goodey and Weil \textbf{\cite{GoodeyWeil1992}} that the right hand side of (\ref{defMimaps}) is indeed the support function of a convex body. In fact Goodey and Weil considered different operators they called mean section body maps and, up to constants, the family $\overline{\mathrm{M}}_i$ turned out to be their even part (see \textbf{\cite{GoodeyWeil1992, Goodey1998, GoodeyWeil2012, GoodeyWeil2014}} and Section 2 for details). In this article we will therefore frequently refer to the convex bodies $\overline{\mathrm{M}}_i K$ as mean section bodies as well.

A different set of operators between convex bodies which arises from an application of the Radon transforms to projection functions are the projection body maps $\overline{\Pi}_i : \mathcal{K}^n \rightarrow \mathcal{K}^n$, given for $1 \leq i \leq n - 1$ by
\begin{equation} \label{defPimaps}
h(\overline{\Pi}_i K,\,\cdot\,) = R_{n-i,1} \mathrm{vol}_i^{\bot}(K|\,\cdot\,).
\end{equation}
Here, $\mathrm{vol}_i^{\bot}(K|\,\cdot\,)$ denotes the continuous
function on $\mathrm{Gr}_{n-i,n}$ given by
$\mathrm{vol}_i^{\bot}(K|E)=\mathrm{vol}_i(K|E^{\bot})$.
Up to normalization, these operators are the projection body maps
introduced by Minkowski. Projection bodies have since become central
objects in convex geometry, functional analysis, and geometric tomography
(see, e.g., \textbf{\cite{Gardner2006, schneider93}}).

A major reason for the prominent role of projection bodies is the sharp
affine isoperimetric inequality established by Petty
\textbf{\cite{petty67}} and its extensions to projection bodies of all
orders due to Lutwak \textbf{\cite{Lutwak1985b}}. To recall these
inequalities, for a convex body $L$ containing the origin in its interior,
let $L^\circ=\{x\in\mathbb{R}^n:x\cdot y\leq1\text{ for all }y\in L\}$
denote its \emph{polar body}. If $K\in\mathcal{K}^n$ has non-empty
interior and $1\leq i\leq n-1$, then
\begin{equation} \label{WVnPiiVnkette}
\frac{\kappa_n^{n+1}}{\kappa_{n-1}^n}V_n(\Pi_i^\circ K)^{-1}
\geq\kappa_n^{n-i}V_n(K)^i,
\end{equation}
with equality if and only if $K$ is an ellipsoid when $i=n-1$, and a
Euclidean ball when $i\leq n-2$. Here, $\Pi_i$ is the projection body
map of order $i$, normalized such that $\Pi_iB=\kappa_{n-1}B$, and
$\Pi_i^\circ K=(\Pi_iK)^\circ$. Thus $\Pi_i$ differs from
$\overline{\Pi}_i$ only by a positive constant. The Lutwak--Petty
inequalities strengthen the classical inequalities relating the
quermassintegrals $W_{n-i}(K)$ to the volume of $K$ (see Section 2).
Related inequalities for mixed bodies and projection measures were
obtained by Lutwak \textbf{\cite{Lutwak1986MixedBodies}}. For connections
between the Petty projection, Blaschke--Santal\'o, and other affine
isoperimetric inequalities, see
\textbf{\cite{Lutwak1986Affine, Lutwak1993Selected}}.

Analogues of (\ref{WVnPiiVnkette}) for mean section bodies were obtained
more recently. With the normalization in (\ref{defMimaps}), for
$K\in\mathcal{K}^n$ with non-empty interior and $1\leq i\leq n-1$,
these inequalities take the form
\begin{equation} \label{meansectionpolariso}
W_{n-i}(K)^n
\geq\frac{\kappa_n^{n+1}}{\kappa_i^n}
V_n((\overline{\mathrm{M}}_iK)^\circ)^{-1}
\geq\kappa_n^{n-i}V_n(K)^i.
\end{equation}
For $2\leq i\leq n-1$, equality on the right holds if and only if
$K$ is a Euclidean ball; for $i=1$, it holds precisely for ellipsoids.
The case $i=n-1$ follows from results in
\textbf{\cite{HaberlSchu2019}}, while the intermediate degrees were
settled in \textbf{\cite{HofstaetterKniefaczSchuster2025}}. The case $i=1$ is a
consequence of the Blaschke--Santal\'o inequality.

The corresponding problem of finding sharp lower bounds for intrinsic
volumes of projection bodies remains largely open. Lutwak
\textbf{\cite{Lutwak1990b}} conjectured that the ratios
$V_{i+1}(\Pi_iK)/V_{i+1}(K)^i$, $1\leq i\leq n-1$, are minimized
by Euclidean balls and showed that this would follow from Petty's
conjectured projection inequality, studied further by Lutwak
\textbf{\cite{Lutwak1990Petty}}. When $i=n-1$, ellipsoids are also
minimizers by affine invariance. Lutwak confirmed the conjecture for
$i=1$. Only recently, Petty's conjecture has been proved in some special cases, see \textbf{\cite{Mielke2026,ChenFengLiXiXu2026}}.
For another recent approach to these isoperimetric problems
through fixed points of Minkowski valuations, see
\textbf{\cite{OrtegaMorenoSchuster2021,OrtegaMoreno2021,Ivaki2017,Ivaki2018,SaroglouZvavitch2017}}. These questions also motivated the study of Brunn--Minkowski
inequalities for projection bodies. In particular, Lutwak
\textbf{\cite[\textnormal{Theorem 6.2}]{lutwak93}} proved that, for
$K,L\in\mathcal{K}^n$ with non-empty interiors, $1\leq i\leq n$,
$2\leq j\leq n-1$, and $\lambda\in(0,1)$,
\begin{equation} \label{Piilogconc}
V_i(\overline{\Pi}_j((1-\lambda)K+\lambda L))
\geq V_i(\overline{\Pi}_jK)^{1-\lambda}
V_i(\overline{\Pi}_jL)^\lambda,
\end{equation}
with equality if and only if $K$ and $L$ are translates of each other.

While (\ref{Piilogconc}) holds for all intrinsic volumes of projection
bodies of arbitrary order, its analogues for mean section bodies have
previously been known only in restricted ranges. Our first main result
establishes the complete family of these inequalities.

\begin{satz} \label{thm1}
Let $1 \leq i \leq n$ and $2 \leq j \leq n - 1$. If
$K,L \in \mathcal{K}^n$ have non-empty interiors, then for every
$\lambda \in (0,1)$,
\begin{equation} \label{thm4form}
V_i(\overline{\mathrm{M}}_j(\lambda K + (1-\lambda)L))
\geq V_i(\overline{\mathrm{M}}_jK)^\lambda
V_i(\overline{\mathrm{M}}_jL)^{1-\lambda},
\end{equation}
with equality if and only if $K$ and $L$ are translates of each other.
\end{satz}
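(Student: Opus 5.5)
The plan follows Lutwak's proof of (\ref{Piilogconc}). Write $K_\lambda=\lambda K+(1-\lambda)L$ and $Q=\overline{\mathrm{M}}_jK_\lambda$. The idea is to combine three ingredients: the multilinear structure of $K\mapsto h(\overline{\mathrm{M}}_jK,\cdot)$, a Brunn--Minkowski type concavity for a suitable averaged mixed volume, and Minkowski's first inequality for mixed volumes. The averaged concavity is the new Aleksandrov--Fenchel input announced in the abstract, and I expect it to be the main obstacle.

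\emph{Step 1: multilinear structure.} For $E\in\mathrm{Gr}_{j,n}$ we have $\mathrm{vol}_j(K|E)=\binom{n}{j}\kappa_{n-j}^{-1}V(K[j],B_{E^\bot}[n-j])$, where $B_{E^\bot}$ is the unit ball of $E^\bot$. Hence $h(\overline{\mathrm{M}}_jK,u)$ is a Radon average over $E\ni u$ of mixed volumes that are multilinear in $K$. Polarizing gives mixed mean section functions $h(\overline{\mathrm{M}}_j(K_1,\dots,K_j),\cdot)$, and $h(\overline{\mathrm{M}}_jK_\lambda,\cdot)$ is their binomial expansion in $\lambda$.

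For fixed $i$ and a body $Q$, write $V_i(P)=c_{n,i}V(P[i],B[n-i])$ and define
\[A_Q(K_1,\dots,K_j)=c_{n,i}\int_{S^{n-1}}h(\overline{\mathrm{M}}_j(K_1,\dots,K_j),u)\,dS(Q[i-1],B[n-i],u).\]
By Fubini, $A_Q$ is an average of $V(K_1,\dots,K_j,B_{E^\bot}[n-j])$ against a positive measure $\mu_Q$ on $\mathrm{Gr}_{j,n}$. Moreover $A_Q(K[j])=c_{n,i}V(\overline{\mathrm{M}}_jK,Q[i-1],B[n-i])$ and $A_Q(Q'[j])$ with $Q=\overline{\mathrm{M}}_jQ'$ gives back $V_i(Q)$.

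\emph{Step 2: averaged Aleksandrov--Fenchel inequality (main obstacle).} I need
\[A_Q(K_1,K_2,K_3,\dots,K_j)^2\geq A_Q(K_1,K_1,K_3,\dots)\,A_Q(K_2,K_2,K_3,\dots).\]
Pointwise Aleksandrov--Fenchel for each $E$ does not survive averaging, since Minkowski's integral inequality points the wrong way. So the inequality must be proved for the averaged functional directly. I would first express $A_Q$ as a genuine mixed volume, or as a limit of mixed volumes, of the $K_r$ together with auxiliary zonoid-type bodies generated by $\mu_Q$. Then classical Aleksandrov--Fenchel applies, and its equality characterization can be used to identify equality cases. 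If that fails, I would run the Shenfeld--van Handel / hyperbolic-quadratic-form approach for the operator $K\mapsto R_{j,1}\mathrm{vol}_j(K|\cdot)$, integrated against $S(Q[i-1],B[n-i],\cdot)$.

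Once Step 2 holds, the standard argument gives that $\lambda\mapsto A_Q(K_\lambda[j])^{1/j}$ is concave.

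\emph{Step 3: conclusion.} Step 2 yields
\[V_i(Q)=A_Q(K_\lambda[j])\ge\bigl(\lambda A_Q(K[j])^{1/j}+(1-\lambda)A_Q(L[j])^{1/j}\bigr)^j\ge A_Q(K[j])^\lambda A_Q(L[j])^{1-\lambda}.\]
Minkowski's inequality gives $A_Q(K[j])\ge V_i(\overline{\mathrm{M}}_jK)^{1/i}V_i(Q)^{(i-1)/i}$, and likewise for $L$. Substituting into the display and dividing by $V_i(Q)^{(i-1)/i}$ yields (\ref{thm4form}).

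\emph{Equality.} Equality forces equality in the averaged Brunn--Minkowski step. That step must be shown to force $K$ and $L$ to be homothetic, using the equality case of Step 2 for full-dimensional bodies and $j\ge2$. Equality in the arithmetic--geometric mean step then forces equal $j$th-power averages, so $K$ and $L$ have equal size. Hence they are translates, and the converse is immediate by translation invariance of $\overline{\mathrm{M}}_j$.
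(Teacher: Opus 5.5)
Your architecture is exactly the paper's, and Step~3 is correct given Step~2: polarize, integrate against $S(Q[i-1],B[n-i],\cdot)$, get concavity of $A_Q(K_\lambda[j])^{1/j}$, then apply Minkowski's inequality with $Q=\overline{\mathrm{M}}_jK_\lambda$. The gap is that Step~2 is the entire difficulty (it is Theorem~\ref{thm:averagedAF}), and you leave it unproved.

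\emph{Step~2.} Your first route, writing $A_Q$ as a mixed volume with zonoid-type auxiliary bodies, is straightforward only for $n-j=1$. There $B_{E^\perp}$ is a segment entering linearly, which is the already known case $j=n-1$. For $n-j\geq2$ you give no construction, and positivity of $\mu_Q$ alone cannot suffice. Take $n\geq4$, $j=2$, and $\mu=\delta_{E_1}+\delta_{E_2}$ with orthogonal planes $E_1,E_2$. Each term is then the planar mixed area $c\,V_{E_k}(K|E_k,L|E_k)$, with $c=\kappa_{n-2}\binom{n}{2}^{-1}$. For $L=B$ and $K=(1+t)(B\cap E_1)+(1-t)(B\cap E_2)$ one gets
\[
\varphi(K,L)^2=4c^2\pi^2<4c^2\pi^2(1+t^2)=\varphi(K,K)\varphi(L,L).
\]
Your fallback of ``running'' Shenfeld--van Handel does not address this either. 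The example shows that positive combinations of forms satisfying Aleksandrov--Fenchel can acquire several positive directions, so something specific to $\mu_Q$ must be used.

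The missing ideas are these, with $\mathbf{C}=(K_3,\ldots,K_j)$:
\begin{itemize}
\item[(i)] $\mu_Q$ is a mixture of the fiber measures $\nu_j^u$, and each fiber average collapses to a single mixed volume with $(n-1)$-dimensional disks: $h(\overline{\mathrm{M}}_jK,u)=\frac{n\kappa_{j-1}}{j\kappa_{n-1}}V(K[j],D_u[n-j])$.
\item[(ii)] The forms $Q_u(f,g)=V(f,g,\mathbf{C},D_u[n-j])$ on $C^2(S^{n-1})$ each satisfy Aleksandrov--Fenchel. More importantly, they are all negative semidefinite on one $u$-independent hyperplane $H=\{f:V(f,\mathbf{C},B[n-j+1])=0\}$.
\end{itemize}
The paper proves (ii) with the zonal function $p_u(v)=\bar p(u\cdot v)$, $\bar p=T_{n-j}1$. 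For it, the polarized ball--disk identity gives $Q_u(p_u,f)=V(f,\mathbf{C},B[n-j+1])$ and $Q_u(p_u,p_u)>0$. Negativity on a common hyperplane survives integration against any non-negative measure, and Lemma~\ref{lem:negativehyperplane} converts it back into the averaged inequality.

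\emph{Equality.} Your equality argument has a second gap. It needs an equality characterization for the averaged inequality of Step~2, which is not available. The paper proves none, and even for the classical inequality with general reference bodies this is delicate.

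The paper extracts equality from the Minkowski step instead, which is already in your chain:
\begin{itemize}
\item[(1)] Equality in (\ref{thm4form}) forces equality in both Minkowski inequalities, so $\overline{\mathrm{M}}_jK$, $\overline{\mathrm{M}}_jL$ and $Q$ are homothetic.
\item[(2)] By origin symmetry they are dilates: $\overline{\mathrm{M}}_jK=aQ$ and $\overline{\mathrm{M}}_jL=bQ$ with $a^\lambda b^{1-\lambda}=1$.
\item[(3)] The identity $W_{n-1}(\overline{\mathrm{M}}_jA)=\kappa_jW_{n-j}(A)$ then yields $W_{n-j}(K_\lambda)=W_{n-j}(K)^\lambda W_{n-j}(L)^{1-\lambda}$.
\item[(4)] The classical equality case of the Brunn--Minkowski inequality for $W_{n-j}$, $j\geq2$, gives that $K$ and $L$ are translates.
\end{itemize}
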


The first intrinsic volume provides a direct connection between
(\ref{Piilogconc}), Theorem \ref{thm1}, and the classical
Brunn--Minkowski theory. Indeed, for both families of operators,
$V_1$ of the image is a positive constant multiple of $V_j$ of the
original body. Thus the inequalities for $i=1$ follow from the
multiplicative Brunn--Minkowski inequality for intrinsic volumes.
For the volume functional itself, this fundamental log-concavity
property reads
\begin{equation} \label{logbmineq}
V_n((1-\lambda)K+\lambda L)
\geq V_n(K)^{1-\lambda}V_n(L)^\lambda,
\end{equation}
where $K,L\in\mathcal{K}^n$ have non-empty interiors and
$\lambda\in(0,1)$. Equality holds if and only if $K$ and $L$ are
translates. For an account of the many consequences of the
Brunn--Minkowski inequality, including the classical Euclidean
isoperimetric inequality, we refer to Gardner
\textbf{\cite{gardner02}}.

Several earlier results on Minkowski valuations imply special cases
of Theorem \ref{thm1}. In particular, the case $j=n-1$ follows from
\textbf{\cite{Schu06b}}, and the range $1\leq i\leq j+1$ is covered
by inequalities from \textbf{\cite{BPSW2014}}, extending earlier work
in \textbf{\cite{SchusterCrofton2010, ParapatitsSchuster2012}}.
Theorem \ref{thm1} removes the restriction on the intrinsic volume and includes a complete characterization of
equality without regularity assumptions on $K$ and $L$. The relation
between Lefschetz operators, Radon transforms, and representations of
convex valuations was investigated in \textbf{\cite{BraunerHofstaetterOrtega2026Lefschetz, SchusterWannerer2015}}.

Our proof follows Lutwak's approach in combining an interchange identity
for mixed quermassintegrals with the Aleksandrov--Fenchel inequality.
For mean section bodies, both ingredients require new forms. The
interchange identity is obtained from a mixed volume representation of
$\overline{\mathrm{M}}_j$, while the required Aleksandrov--Fenchel
inequality concerns averages of mixed volumes with varying reference
bodies. To state it, recall that for $K_1,\ldots,K_m\in\mathcal{K}^n$
and $\lambda_1,\ldots,\lambda_m\geq0$, Minkowski's polynomial formula
gives
\begin{equation} \label{mixed}
V_n(\lambda_1K_1+\cdots+\lambda_mK_m)
=\sum_{j_1,\ldots,j_n=1}^m
V(K_{j_1},\ldots,K_{j_n})\lambda_{j_1}\cdots\lambda_{j_n}.
\end{equation}
The symmetric coefficients $V(K_{j_1},\ldots,K_{j_n})$ are called
\emph{mixed volumes}. As usual, $K[r]$ denotes $r$ copies of $K$,
with an entry omitted when $r=0$, and tuples abbreviate lists of
reference bodies in mixed volumes.

The classical Aleksandrov--Fenchel inequality (see, e.g.,
\textbf{\cite{Aleksandrov1937, schneider93, ShenfeldVanHandel2019}}) states that, for
$K,L\in\mathcal{K}^n$ and any $(n-2)$-tuple $\mathbf{A}$ of convex
bodies,
\[
V(K,L,\mathbf{A})^2
\geq V(K,K,\mathbf{A})V(L,L,\mathbf{A}).
\]

In general, this inequality is not preserved under positive linear
combinations of mixed volumes, and relatively little
appears to be known with regard to this, apart from a few very special results and a general structural
theorem; see, e.g. \textbf{\cite{AbardiaWannerer2015,RossSuessWannerer2025}}. Our second main result shows that the Aleksandrov--Fenchel inequality
is preserved when averaging a particular family of mixed volumes whose
reference bodies include Euclidean balls of lower dimension. For
$u\in S^{n-1}$, let $D_u=B\cap u^\perp$, where $B$ is the
Euclidean unit ball in $\mathbb{R}^n$.

\begin{satz} \label{thm:averagedAF}
Let $2 \leq j \leq n - 1$ and let $\mu$ be a finite non-negative
Borel measure on $S^{n-1}$. For
$K_1,\ldots,K_{j-2} \in \mathcal{K}^n$, put
$\mathbf{C} = (K_1,\ldots,K_{j-2})$, interpreted as the empty tuple when $j=2$, and define
\begin{equation} \label{def:averagedmixedvolume}
\varphi(K,L) = \int_{S^{n-1}}
V(K,L,\mathbf{C},D_u[n-j])\,d\mu(u),
\qquad K,L \in \mathcal{K}^n.
\end{equation}
Then, for all $K,L \in \mathcal{K}^n$,
\begin{equation} \label{ineq:averagedAF}
\varphi(K,L)^2 \geq \varphi(K,K)\varphi(L,L).
\end{equation}
\end{satz}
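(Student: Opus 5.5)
The plan is to prove (\ref{ineq:averagedAF}) through the standard hyperbolicity reformulation of the Aleksandrov--Fenchel inequality and then control what happens under averaging. By continuity of mixed volumes and dominated convergence in $u$, we may assume that $K,L,K_1,\ldots,K_{j-2}$ are smooth with positive curvature. We may also assume $\mu$ is not the zero measure. Consider, for each $u\in S^{n-1}$, the symmetric bilinear form
\[
q_u(f,g)=V(f,g,\mathbf{C},D_u[n-j])
\]
on the space of differences of support functions. Here $V$ is extended bilinearly in its first two entries, and the remaining entries are genuine convex bodies. Then $\varphi=\int_{S^{n-1}} q_u\,d\mu(u)$. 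The classical Aleksandrov--Fenchel inequality, with reference tuple $(\mathbf{C},D_u[n-j])$, says that each $q_u$ is of \emph{Lorentzian type}: whenever $q_u(h,h)>0$ for some $h$, the form $q_u$ is negative semidefinite on the $q_u$-orthogonal complement of $h$. Once the same property is proved for $\varphi$, the inequality (\ref{ineq:averagedAF}) follows by the usual discriminant argument applied to $t\mapsto\varphi(K+tL,K+tL)$.

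The averaging step rests on the following linear-algebra lemma. Suppose a family of Lorentzian-type forms $q_u$ admits a \emph{common} vector $v$ such that $q_u(v,v)>0$ for all $u$, and such that the linear functionals $q_u(v,\cdot)$ are all positive multiples of one fixed functional $\ell$. Then every $q_u$ is $\le 0$ on $\ker\ell$. Hence $\varphi\le 0$ on $\ker\ell$ while $\varphi(v,v)>0$, so $\varphi$ has exactly one positive eigenvalue, and the Aleksandrov--Fenchel inequality for $\varphi$ follows. The whole task is therefore to find such a common direction $v$ for the forms $q_u$.

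The main obstacle is precisely this choice of $v$. The natural candidate $v=h(B,\cdot)$ does not work directly, because $V(B,\cdot,\mathbf{C},D_u[n-j])$ genuinely depends on $u$. I would first try to exploit the special structure of the discs. Write $S_u=[-u,u]$ and $B\subset D_u+S_u$. Expanding $V(\cdot,\cdot,\mathbf{C},(D_u+tS_u)[n-j])$ in $t$, all terms containing two or more copies of $S_u$ vanish, which leaves
\[
V(\cdot,\cdot,\mathbf{C},D_u[n-j])+(n-j)\,t\,V(\cdot,\cdot,\mathbf{C},D_u[n-j-1],S_u).
\]
The second mixed volume is, by the segment formula, a constant times a mixed volume of projections onto $u^\perp$. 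Using this, and the fact that $D_u$ is a rotation-equivariant family, one can try to realize $\varphi$ as a suitable limit or derivative of a single mixed volume $V(K,L,\mathbf{C},\ldots)$ with an honest convex body as reference entry. In the case $n-j=1$ this is immediate: $\varphi(K,L)=V(K,L,\mathbf{C},Z)$ with $Z=\int D_u\,d\mu(u)$ a Minkowski integral, so (\ref{ineq:averagedAF}) is plain Aleksandrov--Fenchel.

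For $n-j\ge 2$, I would seek the common vector $v$ among bodies built from $\mu$ itself. The candidates are $Z$ above, or the zonoid $\int S_u\,d\mu(u)$, chosen so that $q_u(v,\cdot)$ becomes independent of $u$ up to a positive factor. If no such $v$ exists, the fallback is to argue inductively in $n-j$. The expansion above gives a recursion in $t$, and the polynomial $x\mapsto\int V\bigl(\sum x_iK_i[2],\mathbf{C},D_u[n-j]\bigr)\,d\mu(u)$ can then be related to a directional derivative of a polynomial of the same shape with one fewer disc. Lorentzian-ness, and hence the Aleksandrov--Fenchel inequality, is preserved under such derivatives. I expect this step to require the most care.
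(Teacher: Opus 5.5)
Your reduction is sound as far as it goes. Each $q_u$ is Lorentzian, and if all $q_u$ are negative semidefinite on one common hyperplane $\ker\ell$, then so is $\varphi=\int q_u\,d\mu(u)$, and (\ref{ineq:averagedAF}) follows. The case $n-j=1$ via the Minkowski integral $\int D_u\,d\mu(u)$ is also correct. But for $n-j\geq 2$ you never produce the common hyperplane, which is the whole content of the theorem.

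Moreover, your lemma asks for the wrong object: a single $v$, independent of $u$, with $q_u(v,\cdot)$ a positive multiple of a fixed $\ell$ for every $u$. Your candidates $\int D_u\,d\mu(u)$ and $\int[-u,u]\,d\mu(u)$ are multiples of $B$ when $\mu$ is rotation invariant, and you already observed that $B$ fails. More fundamentally, for smooth $v$ such a common vector cannot exist, already when $\mathbf{C}$ consists of balls. Put $m=n-j$. Near $w=\pm u$ the density of $S(v,\mathbf{C},D_u[m],\cdot)$ carries a factor $(1-(u\cdot w)^2)^{-m/2}$ and is unbounded unless $D^2v(\pm u)=0$. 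Since this singularity moves with $u$, proportionality to one fixed measure forces $D^2v\equiv0$, i.e.\ $q_u(v,\cdot)\equiv0$.

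The inductive fallback does not repair this. The direction in which you would have to differentiate is $D_u$, which varies with the integration variable, so $\varphi$ is not a directional derivative of a single averaged polynomial. Also, averages of Lorentzian forms are not Lorentzian in general.

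The missing idea is that the test vector may depend on $u$, as long as the functional it induces does not. Take $\ell(f)=V(f,\mathbf{C},B[m+1])$. For each $u$, take the zonal function $p_u(w)=\bar p(u\cdot w)$ with $\bar p=T_m1$ as in (\ref{def:kerneltransform}). The polarized zonal identity of Lemma \ref{lem:zonalrepresentations} gives $V(1,f,\mathbf{C},B[m])=V(p_u,f,\mathbf{C},D_u[m])$, i.e.\ $q_u(p_u,\cdot)=\ell$ exactly. Since $\bar p>0$, one has $q_u(p_u,p_u)=\ell(p_u)>0$.

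The Lorentzian property of $q_u$ then gives $q_u\leq0$ on $\ker\ell$ for every $u$, and integrating against $\mu$ finishes. That property comes from Aleksandrov--Fenchel on $C^2(S^{n-1})$ via the bodies with support function $1+tf$; this needs $q_u(1,1)>0$, hence the preliminary reduction to bodies in $\mathbf{C}$ with non-empty interior.

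Your plan does not close without an input of this kind: an exact way to trade ball reference bodies for disk reference bodies against a positive, $u$-dependent function.
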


The proof of Theorem \ref{thm:averagedAF} rests on the observation that
the Aleksandrov--Fenchel inequality for a symmetric bilinear form is
equivalent to its being negative semidefinite on a hyperplane. We first
extend the mixed volumes in (\ref{def:averagedmixedvolume}) to bilinear
forms on $C^2(S^{n-1})$. Using a polarized identity for zonal functions
from \textbf{\cite{BraunerHofstaetterOrtega2024}}, we then show that the
forms corresponding to the different disks $D_u$ are negative
semidefinite on the same hyperplane, determined by a mixed volume with
reference balls. This property is preserved under integration against
a non-negative measure, and hence yields the desired inequality for
the averaged form.

We want to emphasize that this novel method is not built upon pointwise curvature estimates, in contrast to many of the classical arguments.
In recent work, Brauner and Ortega-Moreno~\textbf{\cite{BraunerOrtegaMoreno2026}} address the question of Theorem~\ref{thm:averagedAF} through a more standard approach that is based on a new quadratic mixed discriminant inequality. Since the desired inequality turns out to be false when $j=2$, this case is missing in~\textbf{\cite{BraunerOrtegaMoreno2026}}. The method we introduce in here is fundamentally different in that it does not hinge on such mixed discriminant inequalities. In this way, we not only avoid technical questions of regularity, but are also able to cover the case when $j=2$.

\pagebreak

\centerline{\large{\bf{ \setcounter{abschnitt}{2}
\arabic{abschnitt}. Background material}}}

\reseteqn \alpheqn \setcounter{theorem}{0}

\vspace{0.6cm}

In this section we recall the required facts about mixed volumes and mixed
area measures, including their extension to differences of support functions.
We also specify our normalization of the Radon transforms and introduce the
integral transform for zonal functions used in Section 3. For the convex
geometric background we refer to Schneider \textbf{\cite{schneider93}}.

\vspace{0.2cm}

For $K,L\in\mathcal{K}^n$ and $s,t\geq0$, the
\emph{Minkowski combination} is defined by
$sK+tL=\{sx+ty:x\in K,\ y\in L\}$. Its support function satisfies
\[
h(sK+tL,\cdot)=s\,h(K,\cdot)+t\,h(L,\cdot).
\]
Moreover, for $\vartheta\in\mathrm{SO}(n)$ and $x\in\mathbb{R}^n$,
\[
h(\vartheta K,u)=h(K,\vartheta^{-1}u)
\qquad\mbox{and}\qquad h(K+x,u)=h(K,u)+u\cdot x.
\]
The Hausdorff distance of two convex bodies equals the maximum norm of
the difference of their support functions on $S^{n-1}$. Thus Hausdorff
convergence is equivalent to uniform convergence of support functions.
Two convex bodies are \emph{homothetic} if $L=aK+x$ for some
$a>0$ and $x\in\mathbb{R}^n$, and are \emph{dilates} if $x=0$. Throughout, $B$ denotes the Euclidean unit ball in
$\mathbb{R}^n$ and $\kappa_r$ the volume of the unit ball in
$\mathbb{R}^r$, where $\kappa_0=1$. Spherical Lebesgue measure is
denoted by $du$ and has total mass $n\kappa_n$.

For $f\in C^2(S^{n-1})$, we write
\[
D^2f(u)=\nabla_S^2f(u)+f(u)I_{u^\perp},
\]
where $\nabla_S^2$ is the covariant Hessian on $S^{n-1}$ and
$I_{u^\perp}$ is the identity on the tangent space at $u$. A convex
body $K$ is of class \emph{$C^2_+$} if $h(K,\cdot)\in C^2(S^{n-1})$
and $D^2h(K,\cdot)$ is positive definite at every point of the sphere.
Consequently, for every $f\in C^2(S^{n-1})$, the function $1+tf$
is the support function of a body of class $C^2_+$ whenever $|t|$
is sufficiently small. Choosing a sufficiently large constant $c>0$
also shows that $f=(f+c)-c$ is a difference of support functions.
We will use the fact that bodies of class $C^2_+$ are dense in
$\mathcal{K}^n$.

\vspace{0.2cm}

The mixed volumes recalled in the introduction are symmetric, continuous,
and translation invariant in their arguments, as well as Minkowski
multilinear and monotone with respect to set inclusion in each argument.
We use the normalization $V(K[n])=V_n(K)$ and the notation for
repeated arguments and tuples introduced in Section 1.
For $K,L\in\mathcal{K}^n$ and $0\leq m\leq n-1$, the
\emph{quermassintegrals} and their mixed counterparts are given by
\[
W_m(K)=V(K[n-m],B[m]),\qquad
W_m(K,L)=V(K[n-m-1],B[m],L).
\]
We also set $W_n(K)=\kappa_n$. Thus $W_m(K,K)=W_m(K)$ for
$0\leq m\leq n-1$, and the \emph{intrinsic volumes} $V_i(K)$ are determined by
\begin{equation} \label{viwi}
\kappa_{n-i}V_i(K)=\binom ni W_{n-i}(K),\qquad 0\leq i\leq n.
\end{equation}
In particular, $W_{n-1}(K)=\frac1n\int_{S^{n-1}}h(K,u)\,du$.

For $K,L\in\mathcal{K}^n$ and any $(n-2)$-tuple
$\mathbf{A}=(A_1,\ldots,A_{n-2})$ of convex bodies, the
\emph{Aleksandrov--Fenchel inequality} states that
\begin{equation} \label{eq:classicalAF}
V(K,L,\mathbf{A})^2\geq
V(K,K,\mathbf{A})V(L,L,\mathbf{A}).
\end{equation}
Here, none of the bodies is required to have non-empty interior.
For modern approaches to this inequality through the Bochner method
and Hodge--Riemann relations, we refer to
\textbf{\cite{ShenfeldVanHandel2019, KotrbatyWannerer2022, BernigKotrbatyWannerer}}.
Recent advances on its equality conditions, including Minkowski's
quadratic inequality and the case of convex polytopes, are due to
Shenfeld and van Handel
\textbf{\cite{ShenfeldVanHandel2022, ShenfeldVanHandel2023}}. For further developments concerning the support of mixed area measures
and their relation to equality in the Aleksandrov--Fenchel inequality,
see Hug and Reichert \textbf{\cite{HugReichert2024}}.
A consequence of (\ref{eq:classicalAF}) is the generalized Minkowski
inequality: If $K,L\in\mathcal{K}^n$ have non-empty interiors and
$2\leq i\leq n$, then
\begin{equation} \label{genmink}
W_{n-i}(K,L)^i\geq W_{n-i}(K)^{i-1}W_{n-i}(L),
\end{equation}
with equality if and only if $K$ and $L$ are homothetic. We will
also use the multiplicative form of the Brunn--Minkowski inequality
for quermassintegrals,
\begin{equation} \label{quermassbm}
W_{n-i}(\lambda K+(1-\lambda)L)
\geq W_{n-i}(K)^\lambda W_{n-i}(L)^{1-\lambda},
\qquad 0<\lambda<1,
\end{equation}
for $2\leq i\leq n$ and $K,L$ with non-empty interiors.
Equality holds if and only if $K$ and $L$ are translates.

Associated with $K_1,\ldots,K_{n-1}\in\mathcal{K}^n$ is a finite
non-negative Borel measure $S(K_1,\ldots,K_{n-1},\cdot)$ on
$S^{n-1}$, called the \emph{mixed area measure}, characterized by
\[
V(L,K_1,\ldots,K_{n-1})
=\frac1n\int_{S^{n-1}}h(L,u)\,
 dS(K_1,\ldots,K_{n-1},u),\qquad L\in\mathcal{K}^n.
\]
Mixed area measures are symmetric and Minkowski multilinear in their
body arguments. For $K\in\mathcal{K}^n$ and $0\leq i\leq n-1$,
the \emph{area measure of order $i$} is defined by
$S_i(K,\cdot)=S(K[i],B[n-i-1],\cdot)$. Hence,
\begin{equation} \label{defsi}
W_{n-i}(K,L)=\frac1n\int_{S^{n-1}}h(L,u)\,dS_{i-1}(K,u),
\qquad 1\leq i\leq n.
\end{equation}
By translation invariance of mixed volumes, every mixed area measure
has centroid at the origin, that is,
\[
\int_{S^{n-1}}u\,dS(K_1,\ldots,K_{n-1},u)=0.
\]
Its total mass is $nV(B,K_1,\ldots,K_{n-1})$.

Since every function in $C^2(S^{n-1})$ is a difference of support
functions, mixed volumes extend by multilinearity to arbitrary
$C^2(S^{n-1})$ functions in each of their arguments. This extension
is independent of the chosen representations as differences of support
functions and remains symmetric in all arguments.

More generally, if $K_1,\ldots,K_{n-1}\in\mathcal{K}^n$ and $f$ is
integrable with respect to $S(K_1,\ldots,K_{n-1},\cdot)$, we put
\begin{equation} \label{eq:functionmixedvolume}
V(f,K_1,\ldots,K_{n-1})
=\frac1n\int_{S^{n-1}}f(u)\,
 dS(K_1,\ldots,K_{n-1},u).
\end{equation}
For support functions, this agrees with the usual mixed volume and,
for $f\in C^2(S^{n-1})$, with the multilinear extension above.
In particular, (\ref{eq:functionmixedvolume}) is defined for every
continuous function on $S^{n-1}$. Related questions concerning
Brunn--Minkowski inequalities, monotonicity, and concavity for
functionals defined by integration against area measures were studied
by Colesanti, Hug, and Saor\'in-G\'omez
\textbf{\cite{ColesantiHugSaorinGomez2014, ColesantiHugSaorinGomez2017}}.

\vspace{0.2cm}

We next recall that the invariant measures on the Grassmannians and
on the fibers defining the Radon transforms are normalized to have
total mass one. With this convention, the Kubota formula is
\begin{equation} \label{eq:kubotabackground}
\int_{\mathrm{Gr}_{j,n}}\mathrm{vol}_j(K|E)\,dE
=\frac{\kappa_j}{\kappa_n}W_{n-j}(K),\qquad 1\leq j\leq n-1.
\end{equation}
For $u\in S^{n-1}$, let $D_u=B\cap u^\perp$ denote the
$(n-1)$-dimensional unit disk orthogonal to $u$. Although $D_u$
has empty interior in $\mathbb{R}^n$, it may be used as an argument
of mixed volumes and mixed area measures. Note that $D_u=D_{-u}$
and that $u\mapsto D_u$ is continuous in the Hausdorff metric.

Recall that a map $\Phi:\mathcal{K}^n\to\mathcal{K}^n$ is a
\emph{Minkowski valuation} if
\[
\Phi(K\cup L)+\Phi(K\cap L)=\Phi K+\Phi L
\]
whenever $K,L,K\cup L\in\mathcal{K}^n$. It is called \emph{even}
if $\Phi(-K)=\Phi K$. The operators $\overline{\mathrm{M}}_j$
are continuous even Minkowski valuations. Indeed, for each
$E\in\mathrm{Gr}_{j,n}$, the map $K\mapsto\mathrm{vol}_j(K|E)$
is a continuous translation invariant valuation, and the assertion
follows by applying $R_{j,1}$ in (\ref{defMimaps}).
It also follows directly from (\ref{defMimaps}) that
$\overline{\mathrm{M}}_j$ is translation invariant, commutes with
rotations, is monotone with respect to set inclusion, and is homogeneous
of degree $j$. Moreover, $\overline{\mathrm{M}}_jK$ is origin-symmetric
for every $K\in\mathcal{K}^n$, and our normalization gives
$\overline{\mathrm{M}}_jB=\kappa_jB$.

To relate these maps to the original mean section operators, let
$\mathrm{AGr}_{k,n}$ denote the space of affine $k$-dimensional
subspaces of $\mathbb{R}^n$, equipped with its rigid motion invariant
measure. This measure is obtained by integrating Lebesgue measure on
$E^\perp$ against invariant probability measure on $\mathrm{Gr}_{k,n}$.
For $2\leq k\leq n$, the normalized mean section operator is given by
\[
h(\mathrm{M}_kK,u)
=\int_{\mathrm{AGr}_{k,n}}
h((K\cap F)-s(K\cap F),u)\,dF,
\]
where the integrand is zero for empty sections and
\[
s(A)=\frac1{\kappa_n}\int_{S^{n-1}}h(A,v)v\,dv
\]
is the \emph{Steiner point} of $A\in\mathcal{K}^n$.
The operator $\mathrm{M}_k$ has degree $n+1-k$. Up to a positive
constant depending only on $n$ and $j$, the operator
$\overline{\mathrm{M}}_j$ is the even part
$\frac12(\mathrm{M}_{n+1-j}K+\mathrm{M}_{n+1-j}(-K))$; see
\textbf{\cite{GoodeyWeil1992, Goodey1998}}.
Relations between section and projection means were studied by Goodey,
Kiderlen, and Weil \textbf{\cite{GoodeyKiderlenWeil1998}}.
For convolution representations and the connection with the spherical
sine transform, we refer to
\textbf{\cite{SchusterWannerer2018, MareschSchuster2012}}, and for
recent results on fixed points to \textbf{\cite{BraunerOrtegaMoreno2025}}.

In the final part of this section, we introduce the transform of zonal
functions which relates mixed volumes with reference balls to those
with reference disks. Fix $\bar{e}\in S^{n-1}$ and identify
$\mathrm{SO}(n-1)$ with the subgroup of $\mathrm{SO}(n)$ fixing
$\bar{e}$. A function on $S^{n-1}$ is called \emph{zonal} if it is
invariant under this subgroup. Every continuous zonal function $f$
can be identified with a unique function $\bar{f}\in C[-1,1]$ by
$f(u)=\bar{f}(u\cdot\bar{e})$. We write $D=D_{\bar{e}}$ and, for
$1\leq i\leq n-1$, set
\[
S_i(K,D,\cdot)=S(K[i],D[n-i-1],\cdot).
\]
Mixed area measures with disk reference bodies are studied in
\textbf{\cite{BraunerHofstaetterOrtegaDisk2025}}, while mixed
Christoffel--Minkowski problems and transformations of mixed area
measures for bodies of revolution are treated in
\textbf{\cite{BraunerHofstaetterOrtegaRevolution2025}}.

Following \textbf{\cite{BraunerHofstaetterOrtega2024}}, for $\alpha>0$
we define $T_\alpha:C[-1,1]\to C[-1,1]$ by
\begin{equation} \label{def:kerneltransform}
(T_\alpha\bar{f})(s)
=(1-s^2)^{\alpha/2}\bar{f}(s)
+\alpha s\int_0^s\bar{f}(t)(1-t^2)^{(\alpha-2)/2}\,dt,
\qquad -1<s<1.
\end{equation}
At $s=\pm1$, the right-hand side is understood by continuous
extension, which exists since $\alpha>0$ and $\bar{f}$ is continuous.
We also put $T_0\bar{f}=\bar{f}$. The relation between $T_{n-i-1}$
and mixed volumes, in the polarized form needed here, is recalled
in Lemma \ref{lem:zonalrepresentations}.

\vspace{0.6cm}

\centerline{\large{\bf{ \setcounter{abschnitt}{3}
\arabic{abschnitt}. Auxiliary Results}}}

\reseteqn \alpheqn \setcounter{theorem}{0}

\vspace{0.6cm}

We first recall an elementary fact about symmetric bilinear forms
which will be used in the proof of Theorem \ref{thm:averagedAF}.

\begin{lem} \label{lem:negativehyperplane}
Let $X$ be a non-zero real vector space and let $[\cdot,\cdot]$
be a symmetric bilinear form on $X$. Then the following statements
are equivalent:
\begin{itemize}
\item[\upshape(a)] For all $x,y \in X$ with $[y,y] \geq 0$,
\[
[x,y]^2 \geq [x,x][y,y].
\]
\item[\upshape(b)] There exists $z \in X$ such that $[x,x] \leq 0$
for every $x \in X$ satisfying $[x,z] = 0$.
\item[\upshape(c)] There exists a hyperplane $H \subseteq X$ on which
$[\cdot,\cdot]$ is negative semidefinite.
\end{itemize}
\end{lem}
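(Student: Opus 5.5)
The plan is to prove the cycle of implications (a)$\Rightarrow$(b)$\Rightarrow$(c)$\Rightarrow$(a). All three steps are elementary linear algebra. The only real care needed is with the degenerate situations: the form may be negative semidefinite on all of $X$, or the functional $[\cdot,z]$ may vanish identically. Throughout, a hyperplane means the kernel of a non-zero linear functional on $X$, i.e.\ a subspace of codimension one. Such a hyperplane exists since $X\neq\{0\}$ (in infinite dimensions via a Hamel basis; if $\dim X=1$ it is $\{0\}$).

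\emph{(a)$\Rightarrow$(b).} First suppose $[y,y]\leq 0$ for all $y\in X$. Then $z=0$ works trivially. Otherwise choose $z$ with $[z,z]>0$. For any $x$ with $[x,z]=0$, apply (a) with $y=z$ to get $0=[x,z]^2\geq [x,x][z,z]$. Since $[z,z]>0$, this gives $[x,x]\leq0$.

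\emph{(b)$\Rightarrow$(c).} Let $z$ be as in (b) and consider the linear functional $\ell=[\cdot,z]$. If $\ell\neq0$, then $H=\ker\ell$ is a hyperplane, and by (b) the form is negative semidefinite on $H$. If $\ell=0$, then every $x\in X$ satisfies $[x,z]=0$. So the form is negative semidefinite on all of $X$, and hence on any hyperplane.

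\emph{(c)$\Rightarrow$(a).} Let $H$ be a hyperplane on which $[\cdot,\cdot]$ is negative semidefinite, and let $x,y$ satisfy $[y,y]\geq0$.

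If $[y,y]=0$, the claimed inequality reads $[x,y]^2\geq0$, which is trivial.

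If $[y,y]>0$, then $y\notin H$, so $X=H\oplus\mathbb{R}y$ and we can write $x=ty+w$ with $t\in\mathbb{R}$ and $w\in H$. Expanding by bilinearity and symmetry gives
\[
[x,y]^2-[x,x][y,y]=\bigl(t[y,y]+[w,y]\bigr)^2-\bigl(t^2[y,y]+2t[w,y]+[w,w]\bigr)[y,y]=[w,y]^2-[w,w][y,y].
\]
This is non-negative because $[w,w]\leq0$ and $[y,y]>0$.

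There is no genuine obstacle here. The step requiring the most attention is this last one, namely the decomposition $X=H\oplus\mathbb{R}y$ and the cancellation of the $t$-terms. The case distinctions in the other two steps are needed only to handle a vanishing functional and a globally semidefinite form.
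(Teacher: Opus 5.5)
Your proof is correct. The implications (a)$\Rightarrow$(b) and (b)$\Rightarrow$(c) coincide with the paper's arguments, including the same case distinctions. You additionally justify that a hyperplane exists, which the paper uses tacitly.

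For (c)$\Rightarrow$(a) your route differs from the paper's. You decompose $X=H\oplus\mathbb{R}y$ and verify directly the identity $[x,y]^2-[x,x][y,y]=[w,y]^2-[w,w][y,y]$, where $w$ is the $H$-component of $x$.

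The paper instead argues by contradiction. If the inequality failed with $[y,y]>0$, Sylvester's criterion would make the Gram matrix of $x,y$ positive definite. Then $x,y$ would be linearly independent. Since $H$ has codimension one, some non-zero combination $ax+by$ lies in $H$ and satisfies $[ax+by,ax+by]>0$, contradicting negative semidefiniteness on $H$.

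Your computation is fully explicit and shows that the defect in the inequality depends only on the $H$-component of $x$. The paper's argument is more structural. It shows that negative semidefiniteness on a hyperplane forbids any two-dimensional subspace on which the form is positive definite. This is the "at most one positive direction" viewpoint that the paper exploits when transferring the inequality to averaged forms.
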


\noindent {\it Proof.} To see that (b) implies (c), note that if the linear
functional $x \mapsto [x,z]$ is non-zero, its kernel is a hyperplane
with the required property. Otherwise, the form is negative semidefinite
on $X$, and we may choose any hyperplane.

Suppose now that (c) holds and let $x,y \in X$ with $[y,y] \geq 0$.
If $[y,y] = 0$, the inequality in (a) is immediate. If $[y,y] > 0$
and the inequality fails, then
\[
[x,x][y,y] - [x,y]^2 > 0.
\]
Hence, by Sylvester's criterion, the matrix
\[
\begin{pmatrix}
[x,x] & [x,y] \\
[x,y] & [y,y]
\end{pmatrix}
\]
is positive definite. Since $H$ has codimension one, there exist
$a,b \in \mathbb{R}$, not both zero, such that $ax+by \in H$.
Positive definiteness also implies that $x$ and $y$ are linearly
independent, so $ax+by\neq0$. Thus $[ax+by,ax+by]>0$, contradicting
the choice of $H$. Thus (c) implies (a).

Finally, to show that (a) implies (b), first note that if the form is negative semidefinite
on $X$, then (b) holds with $z = 0$. Otherwise, choose $z \in X$
with $[z,z] > 0$. For every $x \in X$ with $[x,z] = 0$, (a) gives
\[
0 = [x,z]^2 \geq [x,x][z,z],
\]
and hence $[x,x] \leq 0$, as required.
\hfill $\blacksquare$

\vspace{0.3cm}

We next state a polarized version of
\textbf{\cite[\textnormal{Corollary 3.11}]{BraunerHofstaetterOrtega2024}}
which relates mixed volumes with reference balls to those with reference
disks. Here, $T_\alpha$ is the transform defined in
(\ref{def:kerneltransform}) and $D=B\cap\bar{e}^{\perp}$.

\begin{lem} \label{lem:zonalrepresentations}
Let $1\leq i\leq n-1$ and let $f,g\in C(S^{n-1})$ be zonal,
with $f(u)=\bar{f}(u\cdot\bar{e})$ and
$g(u)=\bar{g}(u\cdot\bar{e})$. If
$\bar{g}=T_{n-i-1}\bar{f}$, then, for every tuple
$\mathbf{C}=(C_1,\ldots,C_i)$ of convex bodies in $\mathcal{K}^n$,
\begin{equation} \label{eq:polarizedkerneltransform}
V(f,\mathbf{C},B[n-i-1])
=V(g,\mathbf{C},D[n-i-1]).
\end{equation}
\end{lem}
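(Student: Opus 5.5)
The plan is to reduce the polarized identity to the unpolarized one in \textbf{\cite[\textnormal{Corollary 3.11}]{BraunerHofstaetterOrtega2024}}, which we take to assert that, for zonal $f$ and $\bar g=T_{n-i-1}\bar f$,
\[
V(f,K[i],B[n-i-1])=V(g,K[i],D[n-i-1])
\]
for every $K\in\mathcal{K}^n$. Both sides of (\ref{eq:polarizedkerneltransform}) are symmetric and Minkowski multilinear in $(C_1,\ldots,C_i)$, because mixed area measures are. Each side is therefore the polarization of its diagonal. Concretely, for $\lambda_1,\ldots,\lambda_i\geq0$, set $K=\lambda_1C_1+\cdots+\lambda_iC_i$ and expand both sides of the unpolarized identity as polynomials in the $\lambda_k$. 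The coefficient of $\lambda_1\cdots\lambda_i$ on each side is $i!$ times the corresponding side of (\ref{eq:polarizedkerneltransform}). Since the identity holds for all $\lambda_k\geq0$, the two polynomials agree on an open cone, so their coefficients coincide and (\ref{eq:polarizedkerneltransform}) follows.

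The step that needs care is checking that the cited corollary really applies with the normalizations used here. First, its definition of $T_\alpha$ must match (\ref{def:kerneltransform}), with the index $\alpha=n-i-1$ equal to the number of disk copies. Second, its test functions must be general continuous zonal $f$, not just $C^2$ ones or support functions. If the cited statement is only for $f\in C^2$, or only for support functions of bodies of revolution, I would extend it by approximation. A continuous zonal $f$ is a uniform limit of smooth zonal functions, obtained by convolving $\bar f$ with a mollifier in a way that preserves zonality. The operator $T_\alpha$ is continuous in the sup norm on $C[-1,1]$. This holds because $(1-t^2)^{(\alpha-2)/2}$ is integrable on $[-1,1]$ for $\alpha>0$, and because the factor $s$ in front of the integral controls the endpoint behaviour. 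Since both sides of (\ref{eq:polarizedkerneltransform}) are integrals against finite measures, they pass to the limit.

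Third, the unpolarized form may be stated only for $K$ of class $C^2_+$. In that case I would first polarize among $C^2_+$ bodies; positive combinations of such bodies are again $C^2_+$, so the coefficient argument goes through. I would then use density of $C^2_+$ bodies together with weak continuity of mixed area measures to reach arbitrary $C_1,\ldots,C_i$, since $f$ and $g$ are continuous. The case $i=n-1$ is trivial: there $T_0$ is the identity and no balls or disks appear. I expect the main obstacle to be this bookkeeping of hypotheses in the cited result, not the polarization itself.
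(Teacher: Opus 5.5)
Your proposal is correct and follows the paper's proof. The paper likewise applies the unpolarized identity of Corollary 3.11 from the cited work to $K=t_1C_1+\cdots+t_iC_i$ and compares the coefficients of $t_1\cdots t_i$, where the common factor $i!$ cancels. Your additional approximation steps would only be needed if the cited result had narrower hypotheses; the paper invokes it directly for all $K\in\mathcal{K}^n$ and continuous zonal $f$.
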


\noindent {\it Proof.} By
\textbf{\cite[\textnormal{Corollary 3.11}]{BraunerHofstaetterOrtega2024}},
for every $K\in\mathcal{K}^n$,
\[
\int_{S^{n-1}}f(u)\,dS(K[i],B[n-i-1],u)
=\int_{S^{n-1}}g(u)\,dS(K[i],D[n-i-1],u).
\]
Equivalently,
\[
V(f,K[i],B[n-i-1])=V(g,K[i],D[n-i-1]).
\]
Replacing $K$ by $t_1C_1+\cdots+t_iC_i$, with $t_1,\ldots,t_i\geq0$,
and comparing coefficients of $t_1\cdots t_i$, we obtain
(\ref{eq:polarizedkerneltransform}). \hfill $\blacksquare$

\vspace{0.3cm}

Note that, by multilinearity, (\ref{eq:polarizedkerneltransform}) also holds when any of the bodies in
$\mathbf{C}$ is replaced by a difference of support functions. In
particular, we may use arbitrary $C^2(S^{n-1})$ functions in these
arguments. Moreover, by rotation invariance of mixed volumes, the identity holds
with any pole $u\in S^{n-1}$, provided that $D$ is replaced by $D_u$
and the zonal functions are rotated accordingly.

\vspace{0.3cm}

The following mixed volume representation of mean section bodies will
be used to obtain an analogue of Lutwak's interchange formula. The
constant corresponds to the normalization in (\ref{defMimaps}).

\begin{koro} \label{cor:meansectionmixedvolume}
Let $1 \leq j \leq n-1$. For every $K \in \mathcal{K}^n$,
\begin{equation} \label{eq:meansectionmixedvolume}
h(\overline{\mathrm{M}}_jK,u)
= \frac{n\kappa_{j-1}}{j\kappa_{n-1}}
V(K[j],D_u[n-j]), \qquad u \in S^{n-1},
\end{equation}
where $D_u = B \cap u^\perp$ and $\kappa_0 = 1$.
\end{koro}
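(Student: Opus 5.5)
The plan is to show that both sides of (\ref{eq:meansectionmixedvolume}), viewed as functions of $K$ for a fixed direction $u$, are the same average of projection volumes $\mathrm{vol}_j(K|E)$ over $E\in\mathrm{Gr}_{j,n}^{\,\mathrm{span}\,u}$, up to a constant depending only on $n$ and $j$. The constant is then fixed by evaluating at $K=B$.

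By definition (\ref{defMimaps}), $h(\overline{\mathrm{M}}_jK,u)$ is the $\nu_j^{\mathrm{span}\,u}$-average of $\mathrm{vol}_j(K|E)$ over all $j$-dimensional subspaces $E\ni u$. Equivalently, it is an average over $F=E^\perp$ ranging over $\mathrm{Gr}_{n-j}(u^\perp)$ with its invariant probability measure.

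For the right-hand side I use that $D_u$ is a zonoid in $u^\perp$:
\[
h(D_u,x)=c_n\int_{S^{n-1}\cap u^\perp}|x\cdot v|\,dv .
\]
Inserting this into each of the $n-j$ disk slots and using Minkowski multilinearity and continuity of mixed volumes (the integral is a limit of Minkowski sums of segments) gives
\[
V(K[j],D_u[n-j])=c_n^{\,n-j}\int\!\!\cdots\!\!\int V\bigl(K[j],[-v_1,v_1],\ldots,[-v_{n-j},v_{n-j}]\bigr)\,dv_1\cdots dv_{n-j}.
\]
The standard formula for mixed volumes with segments (Schneider, (5.3.23)) gives
\[
V\bigl(K[j],[-v_1,v_1],\ldots,[-v_{n-j},v_{n-j}]\bigr)=\binom{n}{j}^{-1}2^{n-j}\,\bigl|\det(v_1,\ldots,v_{n-j})\bigr|\,\mathrm{vol}_j\bigl(K|\mathrm{span}(v_1,\ldots,v_{n-j})^\perp\bigr).
\]
Here $|\det(v_1,\ldots,v_{n-j})|$ is the $(n-j)$-volume of the parallelotope spanned by the $v_i$. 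For almost every choice the span $F$ is an $(n-j)$-subspace of $u^\perp$, so $F^\perp$ is a $j$-subspace containing $u$.

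Next I disintegrate the product measure on $(S^{n-1}\cap u^\perp)^{n-j}$ according to $F=\mathrm{span}(v_1,\ldots,v_{n-j})$. The image measure on $\mathrm{Gr}_{n-j}(u^\perp)$ is invariant under the stabilizer $\mathrm{SO}(n-1)$ of $u$, hence a multiple of the invariant measure. The conditional integral of $|\det|$ given $F$ does not depend on $F$, again by rotation invariance, because $\mathrm{vol}_j(K|F^\perp)$ depends only on $F$. This yields
\[
V(K[j],D_u[n-j])=c_{n,j}\,(R_{j,1}\mathrm{vol}_j(K|\,\cdot\,))(u)
\]
for a constant $c_{n,j}>0$ independent of $K$ and $u$. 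I expect this disintegration to be the only delicate step. Since only proportionality is needed, it can be handled cleanly by comparing the two sides as $\mathrm{SO}(n-1)$-invariant linear functionals of the continuous function $F\mapsto\mathrm{vol}_j(K|F^\perp)$; uniqueness of invariant probability measures then forces equality up to a constant.

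Finally, I determine $c_{n,j}$ by taking $K=B$. On the one hand, $h(\overline{\mathrm{M}}_jB,u)=\kappa_j$. On the other hand, $V(B[j],D_u[n-j])$ can be computed from the Steiner formula for the $(n-1)$-dimensional disk $D_u$. Writing $V(B[j],D_u[n-j])=\binom{n}{j}^{-1}\cdot$ (coefficient of $t^{j}$ in $V_n(D_u+tB)$) and using
\[
V_n(D_u+tB)=\sum_k \kappa_{n-k}V_k(D_u)t^{n-k}, \qquad V_{n-j}(D_u)=\binom{n-1}{n-j}\frac{\kappa_{n-1}}{\kappa_{j-1}},
\]
one gets
\[
V(B[j],D_u[n-j])=\binom{n}{j}^{-1}\binom{n-1}{j-1}\frac{\kappa_j\kappa_{n-1}}{\kappa_{j-1}}=\frac{j\kappa_j\kappa_{n-1}}{n\kappa_{j-1}}.
\]
Hence $c_{n,j}^{-1}=\frac{n\kappa_{j-1}}{j\kappa_{n-1}}$, exactly the constant in (\ref{eq:meansectionmixedvolume}).
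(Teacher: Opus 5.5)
Your proof is correct, but it takes a different route from the paper's. The paper proves the corollary in one line. It writes $h(\overline{\mathrm{M}}_jK,u)$ as the $\nu_j^u$-average of $\mathrm{vol}_j(K|E)$ over $\mathrm{Gr}_{j,n}^{u}$. It then invokes the Kubota-type formula \textbf{\cite[\textnormal{Theorem 5.1}]{BraunerHofstaetterOrtega2024}} with pole $u$, which says exactly that this average equals $\frac{n\kappa_{j-1}}{j\kappa_{n-1}}V(K[j],D_u[n-j])$.

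You instead prove that formula from scratch:
\begin{itemize}
\item you write $D_u$ as an average of centred segments in $u^\perp$ and reduce to mixed volumes with segments;
\item you observe that the push-forward of $|\det(v_1,\ldots,v_{n-j})|\,dv$ under $v\mapsto\mathrm{span}(v)$ is an $\mathrm{SO}(n-1)$-invariant finite measure on $\mathrm{Gr}_{n-j}(u^\perp)\cong\mathrm{Gr}_{j,n}^{u}$, hence proportional to the invariant probability measure;
\item you fix the constant at $K=B$ via the Steiner formula for the $(n-1)$-disk. Your value $V(B[j],D_u[n-j])=\frac{j\kappa_j\kappa_{n-1}}{n\kappa_{j-1}}$ is right.
\end{itemize}
Your route gives a self-contained argument that uses only standard facts from Schneider, and it shows directly why disks appear: each disk slot is an average of segment slots, and segments project onto $F^\perp$. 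The paper's citation gives brevity and keeps the normalizations consistent with the other tool it imports from the same source (Lemma \ref{lem:zonalrepresentations}).

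There is one harmless slip in your segment formula. It should read
$V(K[j],[-v_1,v_1],\ldots,[-v_{n-j},v_{n-j}])=\frac{j!}{n!}\,2^{n-j}\,|\det(v_1,\ldots,v_{n-j})|\,\mathrm{vol}_j(K|F^\perp)$,
so you are missing a factor $1/(n-j)!$. For instance, with $n=3$, $j=1$ and three orthogonal unit segments, the left side is $4/3$, not $8/3$. This does not matter, because you never use the intermediate constant and determine $c_{n,j}$ only at the end.

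Finally, the slot-by-slot insertion of the zonoid representation is most cleanly justified by Fubini with mixed area measures. For any $(n-1)$-tuple $\mathbf{A}$,
$V(D_u,\mathbf{A})=\frac1n\int_{S^{n-1}} h(D_u,x)\,dS(\mathbf{A},x)=c_n\int_{S^{n-1}\cap u^\perp}V([-v,v],\mathbf{A})\,dv$,
and you apply this once per disk slot. This avoids the Hausdorff-limit argument altogether.
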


\noindent {\it Proof.} For $u \in S^{n-1}$, let
$\mathrm{Gr}_{j,n}^{u} = \{E \in \mathrm{Gr}_{j,n}: u \in E\}$,
endowed with its invariant probability measure $\nu_j^u$.
By the definition of $\overline{\mathrm{M}}_j$,
\[
h(\overline{\mathrm{M}}_jK,u)
= \int_{\mathrm{Gr}_{j,n}^{u}}\mathrm{vol}_j(K|E)\,d\nu_j^u(E).
\]
Applying the Kubota type formula from
\textbf{\cite[\textnormal{Theorem 5.1}]{BraunerHofstaetterOrtega2024}}
with pole $u$ now yields (\ref{eq:meansectionmixedvolume}). \hfill $\blacksquare$

\vspace{0.3cm}

Recall that Lutwak's interchange identity for projection bodies
\textbf{\cite{lutwak93}} states that, for $K,L\in\mathcal{K}^n$,
$2\leq i\leq n$, and $1\leq j\leq n-1$,
\[
W_{n-i}(K,\Pi_jL)=W_{n-j-1}(L,\Pi_{i-1}K),
\]
where the operators $\Pi_r$ are normalized such that
$\Pi_rB=\kappa_{n-1}B$. Equivalently, with the normalization in
(\ref{defPimaps}),
\[
W_{n-i}(K,\overline{\Pi}_jL)
=\frac{\kappa_j}{\kappa_{i-1}}
W_{n-j-1}(L,\overline{\Pi}_{i-1}K).
\]
More general symmetry properties of rigid motion invariant homogeneous
bivaluations were established in \textbf{\cite{AleskerBernigSchuster2011}} and applied to interchange
identities for Minkowski valuations in
\textbf{\cite{ParapatitsSchuster2012}}.
The analogue for mean section bodies takes a different form.
Combining Corollary \ref{cor:meansectionmixedvolume} with the integral
representation of mixed quermassintegrals yields the following identity.

\begin{lem} \label{lem:lutwakmeansection}
Let $1 \leq i \leq n$ and $2 \leq j \leq n-1$. For all
$K,L \in \mathcal{K}^n$,
\begin{equation} \label{eq:lutwakmeansection}
W_{n-i}(K,\overline{\mathrm{M}}_jL)
= \frac{\kappa_{j-1}}{j\kappa_{n-1}}
\int_{S^{n-1}} V(L[j],D_u[n-j])\,dS_{i-1}(K,u),
\end{equation}
where $D_u = B \cap u^\perp$.
\end{lem}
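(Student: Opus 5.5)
The identity follows by combining the integral representation (\ref{defsi}) of mixed quermassintegrals with the mixed volume formula of Corollary \ref{cor:meansectionmixedvolume}.

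First, apply (\ref{defsi}) with $L$ replaced by the convex body $\overline{\mathrm{M}}_jL$. For $1\leq i\leq n$ this gives
\[
W_{n-i}(K,\overline{\mathrm{M}}_jL)=\frac1n\int_{S^{n-1}}h(\overline{\mathrm{M}}_jL,u)\,dS_{i-1}(K,u).
\]
When $i=1$ we have $S_0(K,\cdot)=S(B[n-1],\cdot)$, which is spherical Lebesgue measure. The left-hand side is then $W_{n-1}(\overline{\mathrm{M}}_jL)$, so this case is covered by the same formula.

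Second, substitute the pointwise representation (\ref{eq:meansectionmixedvolume}) for the support function:
\[
h(\overline{\mathrm{M}}_jL,u)=\frac{n\kappa_{j-1}}{j\kappa_{n-1}}V(L[j],D_u[n-j]).
\]
The factor $n$ cancels the $\frac1n$ in front of the integral, which yields exactly the constant $\frac{\kappa_{j-1}}{j\kappa_{n-1}}$ in (\ref{eq:lutwakmeansection}).

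The only point needing care is that the integrand is well defined. The map $u\mapsto V(L[j],D_u[n-j])$ is continuous on $S^{n-1}$, because $u\mapsto D_u$ is Hausdorff continuous and mixed volumes are continuous. Since $S_{i-1}(K,\cdot)$ is a finite Borel measure, the integral exists. Corollary \ref{cor:meansectionmixedvolume} holds for all $1\leq j\leq n-1$, so the restriction $2\leq j\leq n-1$ is not needed for this step. With these checks made, the argument is a direct substitution and I expect no real obstacle.
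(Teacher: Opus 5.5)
Your proposal is correct and matches the paper's proof: apply (\ref{defsi}) to the body $\overline{\mathrm{M}}_jL$, then substitute (\ref{eq:meansectionmixedvolume}), so that the factor $n$ cancels against $\frac1n$. Your extra checks are valid but not strictly needed, since (\ref{defsi}) is already stated for $1\leq i\leq n$. These are the treatment of $i=1$ and the continuity of $u\mapsto V(L[j],D_u[n-j])$, which ensures the integral exists.
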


\noindent {\it Proof.} By (\ref{defsi}),
\[
W_{n-i}(K,\overline{\mathrm{M}}_jL)
=\frac1n\int_{S^{n-1}}
h(\overline{\mathrm{M}}_jL,u)\,dS_{i-1}(K,u).
\]
Substituting (\ref{eq:meansectionmixedvolume}) proves the assertion.
\hfill $\blacksquare$

\vspace{1cm}

\centerline{\large{\bf{ \setcounter{abschnitt}{4}
\arabic{abschnitt}. Proof of Theorem 2}}}

\reseteqn \alpheqn \setcounter{theorem}{0}

\vspace{0.6cm}

We now prove Theorem \ref{thm:averagedAF} by identifying a common
hyperplane on which the bilinear forms associated with the disks $D_u$
are negative semidefinite.

\vspace{0.5cm}

\noindent {\it Proof of Theorem \ref{thm:averagedAF}.} Put $m=n-j$, so that
$1\leq m\leq n-2$. We first assume that the bodies in
$\mathbf{C}=(K_1,\ldots,K_{j-2})$ have non-empty interiors.
For $j=2$, there are no reference bodies. Using the extension of mixed
volumes described in Section 2, for each $u\in S^{n-1}$ we define
a symmetric bilinear form on $X=C^2(S^{n-1})$ by
\begin{equation} \label{eq:diskbilinearform}
Q_u(f,g)=V(f,g,\mathbf{C},D_u[m]).
\end{equation}
We will show that every $Q_u$ is negative semidefinite on
\begin{equation} \label{eq:commonhyperplane}
H=\{f\in X: V(f,\mathbf{C},B[m+1])=0\}.
\end{equation}
The defining functional is non-zero, since
$V(B,\mathbf{C},B[m+1])>0$, and hence $H$ is a hyperplane.

First, we record the consequence of the classical
Aleksandrov--Fenchel inequality needed for the forms $Q_u$.
Since the bodies in $\mathbf{C}$ have non-empty interiors and
$D_u$ is an $(n-1)$-dimensional disk,
\[
Q_u(1,1)=V(B,B,\mathbf{C},D_u[m])>0.
\]
Indeed, after translating the bodies in $\mathbf{C}$, they contain
positive multiples of $B$, and the assertion follows from the
monotonicity of mixed volumes and $V(B[j],D_u[m])>0$.
For any $f\in X$, the function $1+tf$ is the support function of
a convex body for all sufficiently small $|t|$. Applying the
Aleksandrov--Fenchel inequality to this body and $B$ yields
\[
Q_u(1+tf,1)^2\geq Q_u(1+tf,1+tf)Q_u(1,1).
\]
Expanding both sides and cancelling gives
\[
Q_u(f,1)^2\geq Q_u(f,f)Q_u(1,1).
\]
Consequently, $Q_u$ is negative semidefinite on the hyperplane
$\{f\in X:Q_u(f,1)=0\}$. By Lemma
\ref{lem:negativehyperplane}, it follows that
\begin{equation} \label{eq:extendeddiskAF}
Q_u(f,g)^2\geq Q_u(f,f)Q_u(g,g)
\end{equation}
for all $f,g\in X$ with $Q_u(g,g)\geq0$.

To identify the common hyperplane in (\ref{eq:commonhyperplane}),
we construct a function $p_u\in X$ such that
\begin{equation} \label{eq:testfunctionidentity}
Q_u(p_u,f)=V(f,\mathbf{C},B[m+1]),\qquad f\in X,
\end{equation}
and $Q_u(p_u,p_u)>0$. Define $\bar{p}$ on $(-1,1)$ by
\begin{equation} \label{eq:testfunctionprofile}
\bar{p}(s)=(T_m1)(s)
=(1-s^2)^{m/2}
+ms\int_0^s(1-t^2)^{(m-2)/2}\,dt,
\end{equation}
and put $p_u(v)=\bar{p}(u\cdot v)$. At the endpoints, we use the
continuous extension
\[
\bar{p}(1)=\bar{p}(-1)
=m\int_0^1(1-t^2)^{(m-2)/2}\,dt>0.
\]
The function $\bar{p}$ is even and strictly positive on $[-1,1]$.
Moreover, on $(-1,1)$,
\[
\bar{p}'(s)=m\int_0^s(1-t^2)^{(m-2)/2}\,dt,
\qquad
\bar{p}''(s)=m(1-s^2)^{(m-2)/2}.
\]
These identities also show that $p_u\in C^2(S^{n-1})$.
For completeness, near the pole $u$, with $s=\cos\theta$, write
\[
\bar{p}(\cos\theta)
=\bar{p}(1)\cos\theta+r(\theta).
\]
The identities above imply
$r^{(\ell)}(\theta)=O(\theta^{m+2-\ell})$ for $0\leq\ell\leq2$.
Thus the remainder, as a radial function in local coordinates at the
pole, has two continuous derivatives there. The argument at $-u$
is identical by evenness.

We now apply Lemma \ref{lem:zonalrepresentations} with $i=j-1$
and pole $u$. Since $n-i-1=m$ and $\bar{p}=T_m1$, its polarized
identity gives
\[
V(1,f,\mathbf{C},B[m])
=V(p_u,f,\mathbf{C},D_u[m]),\qquad f\in X.
\]
Here we used the extension to differences of support functions
following the lemma. Since $1=h(B,\cdot)$, this is precisely
(\ref{eq:testfunctionidentity}).
In particular,
\begin{eqnarray*}
Q_u(p_u,p_u)
&=&V(p_u,\mathbf{C},B[m+1])\\
&=&\frac1n\int_{S^{n-1}}p_u(v)\,
 dS(\mathbf{C},B[m+1],v)>0.
\end{eqnarray*}
Here we used (\ref{eq:functionmixedvolume}) and the fact that the
mixed area measure is non-negative with positive total mass, while
$p_u$ is strictly positive.

For every $f\in H$, (\ref{eq:testfunctionidentity}) now gives
$Q_u(p_u,f)=0$. Applying (\ref{eq:extendeddiskAF}) with $g=p_u$,
we obtain
\[
0=Q_u(f,p_u)^2\geq Q_u(f,f)Q_u(p_u,p_u),
\]
so that $Q_u(f,f)\leq0$. This proves the asserted negative
semidefiniteness on $H$, for every $u\in S^{n-1}$.

Define a symmetric bilinear form on $X$ by
\[
Q(f,g)=\int_{S^{n-1}}Q_u(f,g)\,d\mu(u).
\]
This integral is well defined: expressing $f$ and $g$ as differences
of support functions reduces the integrand to a finite linear
combination of continuous mixed volumes, and $\mu$ is finite.
Since $\mu$ is non-negative, for every $f\in H$,
\[
Q(f,f)=\int_{S^{n-1}}Q_u(f,f)\,d\mu(u)\leq0.
\]
A second application of Lemma \ref{lem:negativehyperplane}
therefore gives
\[
Q(f,g)^2\geq Q(f,f)Q(g,g)
\]
whenever $Q(g,g)\geq0$. In particular, if $K,L$ are of class
$C^2_+$, their support functions belong to $X$, and the
non-negativity of mixed volumes implies $Q(h(L,\cdot),h(L,\cdot))\geq0$.
Since $Q(h(K,\cdot),h(L,\cdot))=\varphi(K,L)$, we obtain
\[
\varphi(K,L)^2\geq\varphi(K,K)\varphi(L,L).
\]

Finally, approximate $K$ and $L$ in the Hausdorff metric by bodies
of class $C^2_+$ and replace each reference body $K_r$ by
$K_r+\varepsilon B$, $\varepsilon>0$. These reference bodies have
non-empty interiors and converge to $K_r$ as $\varepsilon\to0$.
By continuity of mixed volumes and compactness of the family of disks
$D_u$, all three integrands converge uniformly in $u\in S^{n-1}$.
The corresponding integrals therefore converge against the finite
measure $\mu$. Passing to the limit proves the desired inequality
for arbitrary convex bodies. \hfill $\blacksquare$

\vspace{1cm}

\centerline{\large{\bf{ \setcounter{abschnitt}{5}
\arabic{abschnitt}. Proof of Theorem 1}}}

\reseteqn \alpheqn \setcounter{theorem}{0}

\vspace{0.6cm}

For the proof of Theorem \ref{thm1}, we use the abbreviations
$\mathrm{M}=\overline{\mathrm{M}}_j$ and
$K_\lambda=\lambda K+(1-\lambda)L$.

\vspace{0.5cm}

\noindent {\it Proof of Theorem \ref{thm1}.} By the properties recalled
in Section 2, $\mathrm{M}A$ is origin-symmetric and has non-empty
interior whenever $A$ does. Indeed, after a translation,
$A\supseteq rB$ for some $r>0$, and monotonicity gives
$\mathrm{M}A\supseteq r^j\kappa_jB$.
Averaging (\ref{defMimaps}) over $S^{n-1}$ and applying the
classical Kubota formula gives
\begin{equation} \label{eq:meansectionmeanwidth}
W_{n-1}(\mathrm{M}A)=\kappa_j W_{n-j}(A),
\qquad A\in\mathcal{K}^n.
\end{equation}
Indeed, averaging the fiber measures in $R_{j,1}$ gives invariant
probability measure on $\mathrm{Gr}_{j,n}$, so the assertion follows
from (\ref{eq:kubotabackground}) and
$W_{n-1}(\mathrm{M}A)=\frac1n\int_{S^{n-1}}h(\mathrm{M}A,u)\,du$. For $i=1$, the desired inequality and its equality
conditions are therefore immediate from
(\ref{eq:meansectionmeanwidth}) and (\ref{quermassbm}).

Suppose now that $2\leq i\leq n$ and fix a convex body $Q$ with
non-empty interior. For $A_1,\ldots,A_j\in\mathcal{K}^n$, define
\[
F_Q(A_1,\ldots,A_j)
=\frac{\kappa_{j-1}}{j\kappa_{n-1}}
\int_{S^{n-1}}V(A_1,\ldots,A_j,D_u[n-j])\,dS_{i-1}(Q,u).
\]
This functional is symmetric, multilinear and non-negative.
By Lemma \ref{lem:lutwakmeansection},
\begin{equation} \label{eq:interchangeFQ}
F_Q(A[j])=W_{n-i}(Q,\mathrm{M}A).
\end{equation}
The measure $S_{i-1}(Q,\cdot)$ is finite and non-negative. Hence Theorem \ref{thm:averagedAF} gives
\begin{equation} \label{eq:FQAF}
\begin{split}
F_Q(A_1,A_2,A_3,\ldots,A_j)^2
\geq {}&F_Q(A_1,A_1,A_3,\ldots,A_j)\\
&{}\cdot F_Q(A_2,A_2,A_3,\ldots,A_j).
\end{split}
\end{equation}
For $j=2$, the list of remaining arguments is empty.

For bodies $K,L$ with non-empty interiors, put
$a_r=F_Q(K[j-r],L[r])$, $0\leq r\leq j$. These numbers are
positive: after translations, $K$ and $L$ contain positive multiples
of $B$, the mixed volumes $V(B[j],D_u[n-j])$ are positive and
independent of $u$, and $S_{i-1}(Q,\cdot)$ has positive total mass.
Applying (\ref{eq:FQAF}) with the remaining arguments chosen
from $K$ and $L$ yields $a_r^2\geq a_{r-1}a_{r+1}$ for
$1\leq r\leq j-1$. Consequently,
\[
a_r\geq a_0^{(j-r)/j}a_j^{r/j},\qquad 0\leq r\leq j.
\]
By multilinearity and the binomial theorem,
\begin{eqnarray*}
F_Q(K_\lambda[j])
&=&\sum_{r=0}^j\binom jr\lambda^{j-r}(1-\lambda)^r a_r\\
&\geq&\bigl(\lambda a_0^{1/j}+(1-\lambda)a_j^{1/j}\bigr)^j\\
&\geq&a_0^\lambda a_j^{1-\lambda},
\end{eqnarray*}
where the last step is the weighted arithmetic-geometric mean
inequality. Using (\ref{eq:interchangeFQ}), we conclude that
\begin{equation} \label{eq:meansectionmixedlogconcavity}
W_{n-i}(Q,\mathrm{M}K_\lambda)
\geq W_{n-i}(Q,\mathrm{M}K)^\lambda
W_{n-i}(Q,\mathrm{M}L)^{1-\lambda}.
\end{equation}

We now choose $Q=\mathrm{M}K_\lambda$ and write
$w=W_{n-i}(Q)>0$. The general Minkowski inequality
(\ref{genmink}) gives
\begin{eqnarray*}
W_{n-i}(Q,\mathrm{M}K)^i
&\geq& w^{i-1}W_{n-i}(\mathrm{M}K),\\
W_{n-i}(Q,\mathrm{M}L)^i
&\geq& w^{i-1}W_{n-i}(\mathrm{M}L).
\end{eqnarray*}
Combining these inequalities with
(\ref{eq:meansectionmixedlogconcavity}), we obtain
\[
w^i\geq w^{i-1}
W_{n-i}(\mathrm{M}K)^\lambda
W_{n-i}(\mathrm{M}L)^{1-\lambda}.
\]
Dividing by $w^{i-1}$ proves the desired inequality for
$W_{n-i}$ and therefore also for $V_i$.

It remains to establish the equality conditions. Suppose that
equality holds. Then equality must hold in both applications of
the general Minkowski inequality. Since all three mean section
bodies have non-empty interiors, they are homothetic. Their
origin symmetry implies that they are, in fact, dilates. Thus,
there exist $a,b>0$ such that
\[
\mathrm{M}K=aQ,\qquad \mathrm{M}L=bQ.
\]
The equality in the asserted log-concavity inequality, together
with the homogeneity of $W_{n-i}$, gives
$a^\lambda b^{1-\lambda}=1$. Applying
(\ref{eq:meansectionmeanwidth}), we find
\begin{eqnarray*}
W_{n-j}(K)&=&\frac{a}{\kappa_j}W_{n-1}(Q),\\
W_{n-j}(L)&=&\frac{b}{\kappa_j}W_{n-1}(Q),\\
W_{n-j}(K_\lambda)&=&\frac1{\kappa_j}W_{n-1}(Q).
\end{eqnarray*}
Consequently,
\[
W_{n-j}(K_\lambda)
=W_{n-j}(K)^\lambda W_{n-j}(L)^{1-\lambda}.
\]
By the equality conditions of (\ref{quermassbm}), $K$ and $L$
are translates of each other. Conversely, if $K$ and $L$ are
translates, then $K_\lambda$ is a translate of $K$, and the
translation invariance of $\mathrm{M}$ implies equality.
\hfill $\blacksquare$

\vspace{0.5cm}

\noindent {\bf Acknowledgments.} The authors are grateful to Ramon
van Handel for fruitful discussions during the preparation of this article.

The first-named author was supported by the Deutsche Forschungsgesellschaft (DFG), project number~520350299.
The second-named author was partially supported by grant PID2025-169631NB-I00, funded by the Spanish Ministry of Science, Innovation and Universities and the State Research Agency (MICIU/AEI/10.13039/501100011033).
The third-named author was supported by the Austrian science fund (FWF), project number P37030-N.

\end{document}